\newtheorem{theorem}{Theorem}[section]
\newtheorem{lemma}[theorem]{Lemma}
\newtheorem{proposition}[theorem]{Proposition}
\newtheorem{corollary}[theorem]{Corollary}
\theoremstyle{definition}
\newtheorem{definition}[theorem]{Definition}
\newtheorem{remark}[theorem]{Remark}
\newcommand\bcdot{\ensuremath{%
  \mathchoice%
   {\mskip\thinmuskip\lower0.2ex\hbox{\scalebox{1.5}{$\cdot$}}\mskip\thinmuskip}}%
   {\mskip\thinmuskip\lower0.2ex\hbox{\scalebox{1.5}{$\cdot$}}\mskip\thinmuskip}%
   {\lower0.3ex\hbox{\scalebox{1.2}{$\cdot$}}}%
   {\lower0.3ex\hbox{\scalebox{1.2}{$\cdot$}}}%
   }
\renewcommand{\sp}{\mathrm{Sp}}
\newcommand{\fun}{\mathrm{Fun}}
\begin{document}

\title{A short proof of telescopic Tate vanishing}

\author{Dustin Clausen}
\email{dustin.clausen@math.ku.dk}
\address{Department of Mathematics, University of Copenhagen, Copenhagen, Denmark}

\author{Akhil Mathew}
\email{amathew@math.harvard.edu}
\address{Department of Mathematics, Harvard University, One Oxford Street, Cambridge, MA 02138}

\maketitle

\begin{abstract}
We give a short proof of a theorem of Kuhn that Tate constructions for finite
group actions vanish in telescopically localized stable homotopy theory.
In particular, we observe that Kuhn's theorem is equivalent to the statement that the transfer 
$BC_{p+} \to S^0$ admits a section after telescopic localization, which in turn
follows from the Kahn-Priddy theorem. 
\end{abstract}

\section{Introduction}

Let $\sp$ denote the $\infty$-category of spectra.\footnote{The use of
$\infty$-categories is not really necessary here. We use it for convenience in
discussing group actions. The reader can replace $\fun(BG, \sp)$ (of which we
only need the homotopy category) with the
subcategory of the homotopy category of genuine $G$-spectra given by the
Borel-equivariant or cofree ones. Compare the discussion in \cite[Sec.
6.3]{MNN17}.}  
Thanks to the thick subcategory theorem \cite{Hos98}, the ``primes" of $\sp$ 
(in the sense of 
\cite{Balmerspec}) 
are indexed
by the Morava $K$-theories $K(n)$, for $n\geq 0$ and an implicit prime $p$.  In
chromatic homotopy theory, one studies $\sp$ by first studying the
Bousfield-localized categories $L_{K(n)}\sp$, and then attempting to assemble the local knowledge into global knowledge.

One reason this is a viable approach is that the $\infty$-categories
$L_{K(n)}\sp$ have some surprisingly simple properties.  An important example is
the following theorem, which follows from the main results of
\cite{GrS96, HoS96} and has recently been extended in an interesting direction in
\cite{Ambidexterity}. 

\begin{theorem}[{Compare Greenlees-Sadofsky \cite{GrS96}}, Hovey-Sadofsky
\cite{HoS96}] \label{Kn}
Let $G$ be a finite group, and $X$ be a $K(n)$-local spectrum with a
$G$-action, i.e., an object of the $\infty$-category $\fun(BG, L_{K(n)} \sp)$.  Then the \emph{norm map}
$$X_{hG}\rightarrow X^{hG}$$
in $L_{K(n)}\sp$ is an equivalence.
\end{theorem}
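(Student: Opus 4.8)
The plan is to reduce the statement, via two formal steps, to a single fact about the $C_p$-transfer, and then to invoke the Kahn--Priddy theorem; I also indicate the classical route.

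\textbf{Step 1: reduction to $G = C_p$.} For $X \in \fun(BG, L_{K(n)}\sp)$ write $\widehat{X}^G := \mathrm{cofib}\big(X_{hG} \to X^{hG}\big)$ for the cofiber of the norm, formed inside $L_{K(n)}\sp$; then Theorem~\ref{Kn} for $G$ asserts exactly that $\widehat{X}^G \simeq 0$ for all such $X$, and I will call $G$ \emph{good} if this holds. I would first check that good groups are closed under extensions: for $1 \to H \to G \to Q \to 1$ the $G$-norm factors, in $L_{K(n)}\sp$, as
\[
X_{hG} \simeq (X_{hH})_{hQ} \xrightarrow{(\mathrm{Nm}_H)_{hQ}} (X^{hH})_{hQ} \xrightarrow{\mathrm{Nm}_Q} (X^{hH})^{hQ} \simeq X^{hG},
\]
where $X^{hH}$ carries its residual $Q$-action; the first arrow is an equivalence of $Q$-objects as soon as $H$ is good, and the second is one as soon as $Q$ is good. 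Iterating along a composition series reduces us to cyclic groups $C_\ell$ of prime order; and for $\ell \neq p$ the Moore spectrum $S^0/\ell$ is $K(n)$-acyclic, so $\ell$ is invertible in $\pi_0 L_{K(n)}S^0$ and the usual averaging argument makes $C_\ell$ good. Thus everything comes down to $G = C_p$, with $p$ the ambient prime.

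\textbf{Step 2: reduction to a split transfer, and Kahn--Priddy.} The functor $X \mapsto \widehat{X}^{C_p}$ on $\fun(BC_p, L_{K(n)}\sp)$ is lax symmetric monoidal, so $\widehat{X}^{C_p}$ is a module over the $E_\infty$-ring $R := \widehat{(L_{K(n)}S^0)}^{C_p}$, and it is enough to show $R \simeq 0$. Unwinding the defining cofiber sequence $L_{K(n)}\Sigma^\infty_+ BC_p \xrightarrow{\mathrm{Nm}} (L_{K(n)}S^0)^{hC_p} \to R$ and comparing $\mathrm{Nm}$ with restriction to the trivial subgroup, one finds --- this is the reformulation flagged in the abstract, the forward implication being immediate and the reverse one using the ring structure on $R$ --- that $R \simeq 0$ is equivalent to the transfer $t\colon L_{K(n)}\Sigma^\infty_+ BC_p \to L_{K(n)}S^0$ admitting a section. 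To produce one I would pass to $\Sigma_p$: the inclusion $C_p \le \Sigma_p$ has index $(p-1)!$, prime to $p$, and the $\Sigma_p$-transfer $t_{\Sigma_p}\colon L_{K(n)}\Sigma^\infty_+ B\Sigma_p \to L_{K(n)}S^0$ factors through $t$, so it suffices to split $t_{\Sigma_p}$; but the existence of such a section after $v_n$-periodic localization, hence a fortiori after $K(n)$-localization, is precisely the content of the Kahn--Priddy theorem (in the telescopic form giving an actual retraction, not just surjectivity on homotopy). I expect the genuine work to sit here --- both in extracting the section from Kahn--Priddy and in verifying the reformulation of Step~2; the two reductions themselves are formal.

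\textbf{Classical route for Theorem~\ref{Kn} alone.} If one only wants the $K(n)$-local statement, Step~2 can be replaced as follows. Since $X^{hC_p}$ is already $K(n)$-local when $X$ is, $\widehat{X}^{C_p}$ is simply $L_{K(n)}$ applied to the ordinary Tate construction $X^{tC_p}$, so the claim is that $X^{tC_p}$ is $K(n)$-acyclic for every $K(n)$-local $X$. This follows from the Hovey--Sadofsky ``blueshift'' theorem that $Y^{tC_p}$ is $E(n-1)$-local whenever $Y$ is $E(n)$-local, together with the fact that $E(n-1)$-local spectra are $K(n)$-acyclic. The blueshift in turn rests on a Tate spectral sequence computation exploiting that the $p$-series of the relevant formal group has height $n$; the delicate point there is convergence of the Tate spectral sequence, i.e.\ that this particular inverse limit is seen by Morava $K$-theory.
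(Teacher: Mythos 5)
Your skeleton (reduce to $G=C_p$, reformulate Tate vanishing as the localized transfer $\Sigma^\infty_+BC_p\to S^0$ admitting a section, then invoke Kahn--Priddy) is exactly the paper's strategy, but two steps have genuine gaps. First, the reduction to prime order: closure under extensions only gets you down to the composition factors of $G$, and for non-solvable $G$ these need not have prime order (e.g.\ $A_5$ is a composition factor of itself), so "iterating along a composition series" does not reduce to cyclic groups of prime order. The actual reduction (Lemma~\ref{reduce}, citing Kuhn) first passes to Sylow subgroups by a transfer/Burnside-ring argument --- choosing $n_H$ with $\sum_H n_H[G:H]=1$ over Sylow subgroups $H$, the element $\sum_H n_H[G/H]$ is a unit in the completed Burnside ring acting on $X^{tG}$ through the $X^{tH}$ --- and only then runs your extension argument inside a $p$-group, where a normal subgroup of order $p$ always exists. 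Your averaging argument for $C_\ell$, $\ell\neq p$, is fine but does not rescue the nonabelian simple factors.

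Second, and this is the crux: the Kahn--Priddy theorem does \emph{not} assert that the $T(n)$- or $K(n)$-localized transfer of spectra admits a section. Classically it is a statement about $\Omega^{\infty+1}$ of the transfer, i.e.\ a section at the level of spaces ($p$-locally); the "telescopic form giving an actual retraction" you invoke is essentially the theorem whose proof is at issue, so as written you have assumed the key step. The missing idea is the Bousfield--Kuhn functor $\Phi\colon\mathcal{S}_\ast\to L_{T(n)}\sp$ with $\Phi\circ\Omega^\infty\simeq L_{T(n)}$: applying $\Phi$ to the space-level Kahn--Priddy section (Theorem~\ref{kp}) produces a genuine section of the localized spectrum-level transfer, and the $K(n)$-local case follows since $L_{K(n)}=L_{K(n)}L_{T(n)}$ (or by using the $K(n)$-local Bousfield--Kuhn functor directly). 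Two smaller remarks: your Step~2 reformulation is the paper's Proposition~\ref{equiv}, and the reverse implication needs the statement that an element of $(LS)^0(BC_p)$ restricting to a unit at the basepoint is a unit (Lemma~\ref{unit}), which you flagged but did not supply; and your "classical route" is in outline the original Greenlees--Sadofsky/Hovey--Sadofsky argument for the $K(n)$-local statement alone, but it takes the blueshift theorem as a black box, which is precisely the calculational input the paper's approach avoids.
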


When $n=0$, applying the localization $L_{K(0)}$ is equivalent to working
rationally.  Then this theorem is easy to prove, because the obvious
composition $X^{hG}\rightarrow X\rightarrow X_{hG}$ provides an inverse to the
norm map up to multiplication by the order of $G$.  But for $n>0$, Theorem
\ref{Kn} is surprising, since for example $X$ can easily have torsion dividing
the order of $G$.  The proof of \cite{GrS96, HoS96} is based on the calculation of $K(n)^\ast(
BG)$ for $G$ of prime order.

In addition to the Morava $K$-theory localization functors $L_{K(n)}$, there
are also the closely related telescopic localization functors $L_{T(n)}$.  There
is a natural transformation $L_{T(n)}\rightarrow L_{K(n)}L_{T(n)}=L_{K(n)}$
which is an equivalence for $n=0$ and $n=1$ (proved by Miller \cite{Mil81} at
odd primes and Mahowald \cite{Ma82} at $p=2$), but is believed not to be an
equivalence in general; this question is the \emph{telescope conjecture}.  In contrast to the $L_{K(n)}$, these $L_{T(n)}$ have a more fundamental finitary construction, but for $n>1$ it is not known how to compute their values explicitly.

In \cite[Th. 1.5]{Kuh04a}, Kuhn strengthened Theorem \ref{Kn} to apply to the telescopic localization functors.  That is, he showed that Theorem \ref{Kn} holds with $L_{T(n)}\sp$ replacing $L_{K(n)}\sp$.
Part of this had been previously proved by Mahowald-Shick \cite{MaS} at the
prime $2$. 

Without calculational access to the $L_{T(n)}$, Kuhn's proof is necessarily
different from that of \cite{GrS96}.  Instead it is based on  the
\emph{Bousfield-Kuhn functor}.  For $n  > 0$, this is a
functor 
$\Phi\colon\mathcal{S}_\ast\rightarrow L_{T(n)}\sp$ from pointed spaces to $T(n)$-local spectra such that
we have a natural equivalence
$$\Phi\circ\Omega^\infty \simeq L_{T(n)} \colon \sp \to L_{T(n)}\sp.$$
We refer to \cite{Kuh89, Bou01} for the construction of the functor and
\cite{Kuh08} for a survey. 

Kuhn's proof applies the Bousfield-Kuhn functor to a sequence of generalizations of the Kahn-Priddy splitting.
In this  note, we  use just the Bousfield-Kuhn functor applied to the
Kahn-Priddy splitting.
The key observation is that Tate vanishing for a finite group $G$ is equivalent to the localized
transfer map $BG_+ \to S^0$ admitting a section, and this can be proved directly when $G = C_p$
using the Kahn-Priddy theorem. 
Thus, we obtain a simplification of Kuhn's argument \cite[Sec.
3]{Kuh04a},
avoiding the use of results such as the $C_p$-Segal conjecture.

\subsection*{Acknowledgments} 
We would like to thank the referee for several helpful comments. 
The first author was supported by Lars
Hesselholt's Niels Bohr Professorship. The second author was supported by the NSF
Graduate Fellowship under grant DGE-114415.

\section{The proof}

\begin{lemma}\label{unit}
Let $R$ be a multiplicative cohomology theory and $K$ a pointed connected
CW-complex with basepoint $k$.  If $r\in R^0(K)$ restricts to a unit in $R^0(\{k\})$, then $r$ itself is a unit.
\end{lemma}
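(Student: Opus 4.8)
The plan is to reduce to inverting an element of the form $1-s$ with $s$ in the reduced group $\widetilde R^{0}(K):=\ker\!\big(R^{0}(K)\xrightarrow{\,k^{*}\,}R^{0}(\{k\})\big)$, and then to invert such an element skeleton by skeleton. Since $K$ is connected I may, after replacing it by a homotopy equivalent complex, assume that $K$ has a single $0$-cell, namely $k$; then $\widetilde R^{0}(K)$ is an ideal, and the projection $\pi\colon K\to\{k\}$ splits the restriction $k^{*}$, so that $R^{0}(K)=\pi^{*}R^{0}(\{k\})\oplus\widetilde R^{0}(K)$. Writing $u:=k^{*}(r)$, a unit by hypothesis, its pullback $\pi^{*}(u)$ is a unit of $R^{0}(K)$, and $r=\pi^{*}(u)\cdot(1-s)$ for a suitable $s\in\widetilde R^{0}(K)$. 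So it suffices to show $1-s$ is a unit.

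Next I would treat the finite-dimensional case. If $Y$ is a finite-dimensional pointed CW complex whose basepoint is its unique $0$-cell, then $\widetilde R^{0}(Y)=F^{1}R^{0}(Y)$ for the skeletal (Atiyah--Hirzebruch) filtration; this filtration is multiplicative and satisfies $F^{d+1}R^{*}(Y)=0$ when $\dim Y=d$, so every element of $\widetilde R^{0}(Y)$ is nilpotent and $1-s$ is a unit, its inverse being a finite geometric series. (Alternatively one can induct on $\dim Y$ via the cofiber sequence $Y^{(d-1)}\to Y\to\bigvee S^{d}$, using that cup products of reduced classes vanish on a wedge of spheres.) Applying this to each skeleton $K^{(n)}$ and to the restriction $s_{n}$ of $s$, we conclude that $1-s_{n}$ is a unit of $R^{0}(K^{(n)})$ for every $n$.

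Finally, for general $K$ I would pass from the skeleta to $K$ itself through the Milnor exact sequence
\[
0\to{\textstyle\lim^{1}_{n}}\,R^{-1}(K^{(n)})\to R^{0}(K)\to{\textstyle\lim_{n}}\,R^{0}(K^{(n)})\to 0 .
\]
Multiplication by $1-s$ acts on this sequence of abelian groups; at level $n$ it is multiplication by the unit $1-s_{n}$, hence an automorphism of the ring $R^{0}(K^{(n)})$ and of the module $R^{-1}(K^{(n)})$, and these automorphisms are compatible with the tower maps. Thus multiplication by $1-s$ is an automorphism of both outer terms, hence (five lemma) of $R^{0}(K)$; in particular it is surjective, so some $t$ satisfies $(1-s)t=1$, and $t$ is an inverse since $R^{0}(K)$ is commutative. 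Combined with the first paragraph, $r$ is a unit. The one genuine subtlety is this last passage to the (possibly infinite-dimensional) colimit: the $\lim^{1}$ term is exactly what must be shown not to obstruct assembling the skeletal inverses, and the point is simply that multiplication by a unit is an isomorphism at every finite stage, hence on the whole tower.
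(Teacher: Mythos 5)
Your proof is correct and follows essentially the same route as the paper's: nilpotence of the reduced/augmentation kernel on finite-dimensional (sub)complexes via the multiplicative skeletal (Atiyah--Hirzebruch) filtration, then passage to all of $K$ by the Milnor $\lim^1$ sequence and the five lemma. The factorization $r=\pi^{*}(u)(1-s)$ and the restriction to skeleta are only cosmetic variants of the paper's argument over pointed subcomplexes.
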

\begin{proof}
We prove that for every pointed subcomplex $K'\subset K$, the map $
R^\ast(K')\rightarrow R^\ast(K')$ given by multiplication by $r$ is an isomorphism.  This is true for $K'$ a
point by hypothesis. If $K' \subset K$ is a finite-dimensional subcomplex, an Atiyah-Hirzebruch spectral sequence argument shows
that the kernel of $R^0(K') \to R^0(\left\{k\right\})$ is nilpotent, which
forces $r$ to restrict to a unit in $R^0(K')$. Then a five lemma argument with the Milnor sequence implies it for every $K'$.  Taking $K'=K$ we conclude.
\end{proof}


Throughout, let $L\colon\sp\rightarrow \sp$ be a Bousfield localization
and $G$ a finite group.
\begin{definition} 
Given an object $X \in \fun(BG, L\sp)$, the ($L$-local) \emph{Tate construction}
$X^{tG} \in L\sp$ is the cofiber of the \emph{norm map} $X_{hG} \to X^{hG}$.
\end{definition} 
Note that the homotopy orbits $X_{hG}$ are computed in $L \sp$, so that they
are the $L$-localization of the homotopy orbits in $\sp$, and this is the
$L$-localization of the usual Tate construction in spectra.

Our basic observation is the following proposition. 

\begin{proposition}\label{equiv}
The following conditions are equivalent:

\begin{enumerate}
\item For every $G$-object $X$ of $L\sp$, the norm map $X_{hG}\rightarrow
X^{hG}$ in $L\sp$ is an equivalence, i.e., $ X^{tG} = 0$.
\item Condition 1 holds just for $X=LS$ with trivial $G$-action.
\item The transfer map 
\begin{equation}
\label{transfermap}
\Sigma^\infty_+ BG\rightarrow \Sigma^\infty_+ \ast \end{equation}
of spectra admits a section after applying $L$.
\end{enumerate}
\end{proposition}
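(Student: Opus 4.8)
The plan is to prove the chain of implications $(1) \Rightarrow (2) \Rightarrow (3) \Rightarrow (1)$. The first implication is trivial, since $(2)$ is a special case of $(1)$.

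For $(2) \Rightarrow (3)$, I would recall that the transfer map \eqref{transfermap} is, up to the splitting $\Sigma^\infty_+ BG \simeq \Sigma^\infty BG \vee S^0$, the same as the norm map $(LS)_{hG} \to (LS)^{hG}$ precomposed and postcomposed with the canonical maps $(LS)_{hG} \simeq \Sigma^\infty_+ BG \otimes LS \to LS$ and $LS \to (LS)^{hG}$ — more precisely, the transfer $\Sigma^\infty_+ BG \to S^0$ is, after applying $L$, identified with the norm map $(LS)_{hG} \to LS$ (the bottom of the homotopy fixed points), using that $(LS)^{hG}$ maps to $LS$ by the map classifying the trivial subgroup/basepoint, and that composite $(LS)_{hG} \to (LS)^{hG} \to LS$ is exactly the $L$-localized transfer. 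Hence if the norm map $(LS)_{hG} \to (LS)^{hG}$ is an equivalence, its composite with $(LS)^{hG} \to LS$ furnishes a retraction of $LS \to (LS)_{hG} \xrightarrow{\sim} (LS)^{hG}$, but one must be slightly careful: what is wanted is a section of the transfer. I would argue instead that the unit map $LS \to (LS)^{hG}$ (inclusion of fixed points), composed with the inverse of the norm, lands in $(LS)_{hG} \simeq \Sigma^\infty_+ BG \otimes LS$, and then projecting via the augmentation $\Sigma^\infty_+ BG \to S^0$ recovers multiplication by a unit — this uses Lemma \ref{unit} applied to $R = $ the function spectrum $F(BG_+, LS)$ or rather to the relevant cohomology theory, to see that the composite $LS \to LS$ we get is an equivalence, whence a section of the transfer exists after inverting it. I expect assembling these identifications carefully — pinning down exactly which self-map of $LS$ we produce and invoking Lemma \ref{unit} to see it is a unit — to be the main obstacle; everything else is formal.

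For $(3) \Rightarrow (1)$, I would use that a section of the $L$-localized transfer makes $LS$ a retract, as an $L$-local spectrum with $G$-action (where $\Sigma^\infty_+ BG$ carries the trivial action and $S^0$ too — the point is to set this up so the section is $G$-equivariantly a retraction diagram, or to observe the Tate construction is a $\otimes$-ideal-type functor). Concretely, given $X \in \fun(BG, L\sp)$, the Tate construction $X^{tG}$ is a module over $(LS)^{tG} = S^0 \otimes_{?} \cdots$; more usefully, $(-)^{tG}$ is a lax symmetric monoidal (indeed exact) functor, so $X^{tG}$ is a module over the ring $(LS)^{tG}$, and it suffices to show $(LS)^{tG} = 0$. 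But $(LS)^{tG} = 0$ is equivalent, by the cofiber sequence $(LS)_{hG} \to (LS)^{hG} \to (LS)^{tG}$, to the norm map for the trivial action being an equivalence, which by the analysis in the previous paragraph is equivalent to the transfer admitting a section after $L$-localization — so $(3)$ gives $(LS)^{tG} = 0$ directly, and then $X^{tG} = 0$ for all $X$ follows by the module structure. Here I would cite the standard fact that $(-)^{tG}$ is lax monoidal and annihilates everything once it annihilates the unit; the only care needed is that all constructions take place inside $L\sp$, which is closed under the relevant colimits and limits and is itself symmetric monoidal, so the formalism applies verbatim.
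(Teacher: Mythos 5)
The overall architecture you propose is the same as the paper's: reduce condition (1) to condition (2) via the module structure of $X^{tG}$ over the ring $(LS)^{tG}$ (lax monoidality of the Tate construction; the paper cites Greenlees--May for this), and relate (2) and (3) through the factorization of the $L$-localized transfer as $(LS)_{hG} \xrightarrow{N} (LS)^{hG} = F(BG_+, LS) \xrightarrow{r} LS$, with $r$ evaluation at the basepoint. Your direction (2) $\Rightarrow$ (3) is essentially right, though garbled at the end: the section of the transfer is simply $N^{-1}$ composed with the unit map $u\colon LS \to F(BG_+,LS)$, and $f \circ N^{-1}\circ u = r\circ N \circ N^{-1} \circ u = r\circ u = \mathrm{id}$; no appeal to Lemma~\ref{unit} is needed, and ``projecting via the augmentation $\Sigma^\infty_+ BG \to S^0$'' is the wrong test map --- the augmentation is not the transfer, so an equivalence obtained that way would say nothing about splitting the transfer.

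The genuine gap is in (3) $\Rightarrow$ (1). You write that $(LS)^{tG}=0$ ``is equivalent, by the cofiber sequence, to the norm map for the trivial action being an equivalence, which by the analysis in the previous paragraph is equivalent to the transfer admitting a section.'' But your previous paragraph only argued one direction (norm equivalence implies the transfer splits); the converse is exactly the nontrivial content of the proposition and is not formal, so as written your argument for (3) $\Rightarrow$ (2) is circular. What is needed (and what the paper does) is: a section of $f = r\circ N$ produces $x\in\pi_0((LS)_{hG})$ with $r(Nx)=1$, so $Nx\in (LS)^0(BG)$ restricts to a unit at the basepoint and is therefore a unit by Lemma~\ref{unit}; since $(LS)^{hG}\to (LS)^{tG}$ is a multiplicative map and kills the image of the norm (the composite $(LS)_{hG}\to (LS)^{hG}\to (LS)^{tG}$ is null by the cofiber sequence), the unit $Nx$ maps to $0$, forcing $1=0$ in $\pi_0((LS)^{tG})$ and hence $(LS)^{tG}=0$. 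Ironically, Lemma~\ref{unit} is invoked in your write-up only where it is unnecessary, and omitted precisely where it is indispensable; supplying the argument above closes the gap.
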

\begin{proof}
Clearly 1 $\Rightarrow$ 2; for the converse, we 
use that $X^{tG}$ is a module over $LS^{tG}$ (cf. \cite[Prop.
I.3.5]{GreenleesMay}). 

We now prove that $2 $ and $ 3$ are equivalent. 
We use the basic diagram in $L \sp$ (compare \cite[Sec. I.5]{GreenleesMay})
\begin{equation}  \xymatrix{ (LS)_{hG} \ar[r]^N \ar[rd]^{f} &  (LS)^{hG} \ar[d]^{r}  \ar[r] &  (LS)^{tG}  \\
& LS 
}
\end{equation}
Here: 
\begin{enumerate}
\item  
The Tate construction $(LS)^{tG}$ is a ring spectrum and the map from
$(LS)^{hG}$ is a multiplicative 
map.
\item
 The map $f$ is the $L$-localization of the transfer \eqref{transfermap}. 
 \item 
The map $r$ identifies with the map $F( BG_+, LS) \to LS$ given by the 
basepoint of $BG$.
\item 
 The horizontal row is a cofiber sequence. 
\end{enumerate}

Suppose $2$ holds. Then $N$ is an equivalence. Since $r$ has a section, it
follows from the diagram that $f$ does as well, as desired. 

Finally, suppose $3$ holds, i.e., $f$ has a section. 
To show that $(LS)^{tG} = 0$,  the diagram shows that it  suffices to see that the
induced  map $N\colon \pi_0 ( (LS)_{hG}) \rightarrow \pi_0 ( (LS)^{hG})$ has image
containing a unit, which will then map to zero in $\pi_0 ( (LS)^{tG})$.  
Since $f$ has a section, it follows that 
 there exists $x \in \pi_0   ( (LS)_{hG})$ whose image in $\pi_0 ( LS)$
under $f = r \circ N$ is equal to $1$. 
It follows that $Nx \in \pi_0 ( (LS)^{hG}) = (LS)^0(BG)$ is a unit in view of 
Lemma~\ref{unit}, which completes the proof. 
\end{proof}

Next we need a reduction to the group $C_p$. 
\begin{lemma}\label{reduce}
Let $L$ be a Bousfield localization of spectra.  If the equivalent conditions
of Proposition~\ref{equiv} are satisfied for every group $G$ of prime order, then they are satisfied for every finite group $G$.
\end{lemma}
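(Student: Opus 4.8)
The plan is to induct on the order of $G$, reducing a general finite group to its Sylow subgroups by a transfer argument and then reducing a $p$-group to the cyclic group $C_p$ by peeling off a central subgroup of order $p$. Throughout one works with Tate constructions formed in $L\sp$, and I would first record that this is harmless: $L$ is exact and preserves homotopy orbits, homotopy fixed points of $L$-local spectra are already $L$-local, and consequently the norm maps and change-of-group maps used below may be obtained by applying $L$ to the corresponding constructions in $\sp$. I assume condition (1) of Proposition~\ref{equiv} for groups of prime order --- that is, $Y^{tC_q}=0$ for every prime $q$ and every $Y\in\fun(BC_q, L\sp)$ --- and aim to prove it for all finite $G$.

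The argument rests on two standard facts about Tate constructions, which I would state carefully before invoking. First, for a subgroup $H\le G$ and any $X\in\fun(BG, L\sp)$, there are restriction and transfer maps $X^{tG}\to(\mathrm{Res}^G_H X)^{tH}$ and $(\mathrm{Res}^G_H X)^{tH}\to X^{tG}$ whose composite is multiplication by the index $[G:H]$ on $X^{tG}$ (see e.g.\ \cite{GreenleesMay, MNN17}). Second, for a normal subgroup $N\trianglelefteq G$ with quotient $Q$, the $G$-norm map $X_{hG}\to X^{hG}$ is identified, under the canonical equivalences $X_{hG}\simeq(X_{hN})_{hQ}$ and $X^{hG}\simeq(X^{hN})^{hQ}$, with the composite of the $Q$-homotopy orbits $(X_{hN})_{hQ}\to(X^{hN})_{hQ}$ of the $N$-norm followed by the $Q$-norm of $X^{hN}$ equipped with its residual $Q$-action; in particular, if $(\mathrm{Res}^G_N X)^{tN}=0$ then $X^{tG}\simeq(X^{hN})^{tQ}$.

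With these in hand I would induct on $|G|$. If $|G|=1$ then $X^{tG}=\mathrm{cofib}(\mathrm{id}_X)=0$, and if $|G|$ is prime the claim is the hypothesis. Otherwise there are two cases. If $|G|$ is divisible by at least two distinct primes, then for each prime $p\mid|G|$ a Sylow $p$-subgroup $G_p\subsetneq G$ is proper, so $(\mathrm{Res}^G_{G_p}X)^{tG_p}=0$ by the inductive hypothesis, whence multiplication by $[G:G_p]$ on $X^{tG}$ is nullhomotopic by the first fact; since the integers $[G:G_p]$, over the primes $p\mid|G|$, have greatest common divisor $1$, the identity of $X^{tG}$ is an integer combination of nullhomotopic self-maps, so $X^{tG}=0$. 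If instead $|G|=p^k$ with $k\ge 2$, then $G$ has nontrivial center and so contains a central subgroup $Z\cong C_p$; then $Z\trianglelefteq G$, $|G/Z|<|G|$, and $(\mathrm{Res}^G_Z X)^{tZ}=0$ by the prime-order hypothesis, so the second fact gives $X^{tG}\simeq(X^{hZ})^{t(G/Z)}$, which vanishes by the inductive hypothesis applied to $X^{hZ}\in\fun(B(G/Z), L\sp)$. This closes the induction.

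I expect the only real friction to be the second fact --- the compatibility of the norm map with a normal subgroup, in the usable form ``$(\mathrm{Res}^G_N X)^{tN}=0\Rightarrow X^{tG}\simeq(X^{hN})^{t(G/N)}$'' --- together with the bookkeeping that lets one perform these iterated constructions inside $L\sp$ rather than in $\sp$; I would isolate it as a preliminary lemma or pin down a reference. Everything else --- the transfer formula $\mathrm{Tr}\circ\mathrm{Res}=[G:H]$, the elementary group theory, and the greatest-common-divisor computation --- is routine.
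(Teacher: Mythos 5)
Your overall strategy is the same one the paper invokes by citing Kuhn's Lemmas 2.7 and 2.8: reduce to Sylow subgroups by a transfer argument, then handle $p$-groups by induction using a normal subgroup of order $p$ and the factorization of the $G$-norm as the $G/N$-orbits of the $N$-norm followed by the $G/N$-norm of $X^{hN}$. The second half of your argument (the identification $X^{tG}\simeq (X^{hN})^{t(G/N)}$ once $(\mathrm{Res}^G_N X)^{tN}=0$, plus induction on $|G|$) is correct. The gap is in your ``first fact.'' For spectrum-level Borel/Tate constructions the composite $\mathrm{Tr}\circ\mathrm{Res}$ on $X^{tG}$ is \emph{not} multiplication by the integer $[G:H]$: it is multiplication by the class of $[G/H]$ in $\pi_0(S^{hG})\cong \widehat{A}(G)$ (the Euler/Becker--Gottlieb class), which agrees with the integer $[G:H]$ only after applying the augmentation $\widehat{A}(G)\to\mathbb{Z}$. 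This is not a bookkeeping issue: applied with $H=e$, your statement would say that $|G|\cdot \mathrm{id}_{X^{tG}}$ is null for every $X$ (the composite factors through $X^{te}=0$), which is false --- for instance $S^{tC_2}\simeq S^{\wedge}_2$ by Lin's theorem, and multiplication by $2$ is injective on its $\pi_0$. (The integer formula you are remembering is special to ordinary group cohomology; for generalized theories, already on $KU^0(BC_p)$ the composite is multiplication by the regular representation class, not by $p$.) Consequently your gcd step does not close as written: vanishing of the Sylow Tate constructions gives that each class $[G/G_p]$, hence $u=\sum_p n_p[G/G_p]$, acts by zero on $X^{tG}$, but $u$ is only congruent to $1$ modulo the augmentation ideal, not equal to $1$, so you cannot conclude directly that the identity of $X^{tG}$ is null.

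The repair is standard and is exactly how Kuhn's Lemma 2.7 (and the argument the paper has in mind) proceeds: $X^{tG}$ is a module over $\pi_0(S^{hG})=S^0(BG)$, and the element $u$ above has augmentation $\sum_p n_p[G:G_p]=1$, i.e.\ restricts to a unit at the basepoint of the connected space $BG$; by Lemma~\ref{unit} it is therefore a unit in $S^0(BG)$, and a unit acting by zero forces $X^{tG}=0$. With that substitution --- replacing ``$\mathrm{Tr}\circ\mathrm{Res}=[G:H]$ and gcd of integers'' by ``$\mathrm{Tr}\circ\mathrm{Res}=[G/H]$ and the unit criterion of Lemma~\ref{unit} (or completeness of $\widehat{A}(G)$)'' --- your proof matches the intended argument, and the $p$-group induction needs no change.
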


\begin{proof}
This follows from \cite[Lemmas~2.7 and 2.8]{Kuh04a}. 
\end{proof}

\begin{theorem}[Compare Kahn-Priddy \cite{KP}, Segal \cite{Seg74}]\label{kp}
The transfer map $\Sigma^\infty_+
BC_p\rightarrow \Sigma^\infty_+\ast$ admits a section after
applying the functor $\Omega^{\infty+1}$.
\end{theorem}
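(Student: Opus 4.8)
The plan is to recognize Theorem~\ref{kp} as a reformulation of the Kahn--Priddy theorem and then to carry out only bookkeeping. I would take as input the refined ($p$-primary) Kahn--Priddy theorem \cite{KP, Seg74}: after localization at $p$, the transfer $\Sigma^\infty_+ BC_p \to S^0$ admits a section after applying $\Omega^\infty_0$; equivalently, the map of spaces $Q_0(BC_{p+}) \to Q_0 S^0$ is split surjective after $p$-localization. If one starts instead from the form usually stated for the symmetric group, one observes that the $p$-fold cover $EC_p \to BC_p$ is the pullback of the Kahn--Priddy cover $B\Sigma_{p-1} \to B\Sigma_p$ along $BC_p \to B\Sigma_p$, and that $\Sigma^\infty_+ BC_p \to \Sigma^\infty_+ B\Sigma_p$ is split surjective after $p$-localization because $[\Sigma_p : C_p] = (p-1)!$ is prime to $p$; I would only indicate this routine comparison.

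Granting this, the deduction is formal. First, since $\Omega^{\infty+1}E \simeq \Omega(\Omega^\infty E)$ for any spectrum $E$ and $\Omega$ depends only on the basepoint component, applying $\Omega$ to the Kahn--Priddy section yields a section of $\Omega^{\infty+1}$ of the $p$-localized transfer. Second, away from $p$ the transfer is already an equivalence of spectra: $\widetilde H_*(BC_p;\mathbb Z)$ is entirely $p$-torsion, so $(\Sigma^\infty BC_p)[1/p]$ vanishes (it has trivial homology and is connective), the basepoint inclusion $S^0[1/p] \to (\Sigma^\infty_+ BC_p)[1/p]$ is an equivalence, and its composite with the transfer is multiplication by $p$ on $\pi_0$ (the cover restricts over a point to the trivial degree-$p$ cover), which is an equivalence once $p$ is inverted --- hence so is the transfer. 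In particular $\Omega^{\infty+1}$ of the transfer is an equivalence away from $p$, which certainly admits a section.

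Finally, to assemble the integral statement --- and I would remark that only the $p$-local case is actually needed in the application to Kuhn's theorem, through Proposition~\ref{equiv} and Lemma~\ref{reduce} --- I would glue via the arithmetic fracture square, which $\Omega^{\infty+1}$ carries to a homotopy pullback of nilpotent spaces: the $p$-local section from the first step and the away-from-$p$ inverse equivalence from the second are automatically compatible after rationalization, since rationally the transfer is an equivalence (multiplication by $p$) and any section of an equivalence is its inverse. The only genuinely hard step is the very first one: it is the Kahn--Priddy theorem, and no short proof of it is attempted here --- the contribution of this note is rather that, granted Kahn--Priddy, telescopic Tate vanishing follows formally via Proposition~\ref{equiv}.
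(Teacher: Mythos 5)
Your proposal is correct and is essentially the paper's own argument: the paper likewise treats the statement as the Kahn--Priddy theorem of \cite{KP} and \cite{Seg74}, reduced from $\Sigma_p$ to $C_p$ by the observation that the transfer exhibits $(\Sigma^\infty_+ B\Sigma_p)_{(p)}$ as a summand of $(\Sigma^\infty_+ BC_p)_{(p)}$ (the index $(p-1)!$ being prime to $p$), with primes $q \neq p$ handled trivially since $(\Sigma^\infty BC_p)[1/p] \simeq 0$ and the unit--transfer composite is multiplication by $p$. The only cosmetic difference is the integral assembly: the paper notes that the connected parts of the spectra involved are torsion and hence split as products of their $q$-localizations, whereas you invoke the arithmetic fracture square (and correctly remark that only the $p$-local statement is needed for the application); both amount to the same bookkeeping.
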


Note that the Kahn-Priddy theorem is usually stated for $\Sigma_p$ replacing
$C_p$. However, the result as stated follows because it is reduces to a
statement at the prime $p$ (in fact, $(\Sigma^\infty_+ B \Sigma_p)[1/p] \simeq
S^0[1/p]$) and the transfer exhibits $(\Sigma^\infty_+ B
\Sigma_p)_{(p)}$ as  a summand of $(\Sigma^\infty_+BC_p)_{(p)}$.  
Note also that the connected parts of the spectra in question are all torsion
and split into a product of their $q$-localizations for primes $q$. 

Putting things together, we thus obtain our main result. 

\begin{theorem}
Suppose $L$ is a Bousfield localization of spectra such that there
exists a functor $\Phi\colon \mathcal{S}_\ast\rightarrow L\sp$
such that $\Phi\Omega^\infty\simeq L$.  Then the equivalent conditions of
Proposition \ref{equiv} are satisfied for every finite group $G$.
In particular, Tate constructions in $L\sp$ vanish.
\end{theorem}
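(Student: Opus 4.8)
The plan is to assemble the three preceding results. By Lemma~\ref{reduce} it suffices to verify the equivalent conditions of Proposition~\ref{equiv} when $G = C_p$ is cyclic of prime order, for each prime $p$. Among those equivalent conditions I will establish condition~(3): that the transfer $\tau\colon \Sigma^\infty_+ BC_p \to \Sigma^\infty_+ \ast$ admits a section after applying $L$. Granting this for all $p$, Lemma~\ref{reduce} promotes it to all the conditions of Proposition~\ref{equiv} for every finite $G$, and condition~(1) is exactly the assertion that $X^{tG} = 0$ for every $X \in \fun(BG, L\sp)$, which is the ``in particular'' clause.

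To connect the space-level input (Theorem~\ref{kp}) with this spectrum-level statement I would use the functor $\Phi$ as a transport mechanism. From the natural equivalence $\Phi\Omega^\infty \simeq L$, the identity $\Omega^{\infty+1} = \Omega^\infty \circ \Sigma^{-1}$, and the fact that the localization $L$ commutes with shifts (being exact on a stable category), one obtains a natural equivalence of functors $\sp \to L\sp$
\[
\Phi \circ \Omega^{\infty+1} \;\simeq\; \Sigma^{-1} \circ L .
\]
Naturality of this equivalence, applied to the morphism $\tau$, identifies $\Phi(\Omega^{\infty+1}\tau)$ with the once-desuspended localized transfer $\Sigma^{-1} L\tau$.

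The argument then finishes immediately. Theorem~\ref{kp} provides a map of pointed spaces $s\colon \Omega^{\infty+1}(\Sigma^\infty_+ \ast) \to \Omega^{\infty+1}(\Sigma^\infty_+ BC_p)$ with $(\Omega^{\infty+1}\tau)\circ s \simeq \mathrm{id}$. Applying the functor $\Phi$ and using the identification of the previous paragraph, $\Phi(s)$ is a section of $\Sigma^{-1}L\tau$ in $L\sp$; since $\Sigma$ is an autoequivalence of $L\sp$, suspending once yields a section of $L\tau$ itself. This is condition~(3) of Proposition~\ref{equiv} for $G = C_p$, and by the first paragraph we conclude.

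All the real content has been front-loaded into Proposition~\ref{equiv}, Lemma~\ref{reduce}, and Theorem~\ref{kp}, so I expect no genuine obstacle in this last step; what has to be handled with care is merely the bookkeeping. Concretely: one needs that $\Phi\Omega^\infty \simeq L$ is natural \emph{in the spectrum}, so that it may be evaluated at the map $\Sigma^{-1}\tau$ and not only at objects; that $\Phi$, being just a functor on $\mathcal{S}_\ast$, carries the composite exhibiting $s$ as a section to one exhibiting $\Phi(s)$ as a section; and that moving between $\Sigma^{-1}L\tau$ and $L\tau$ is free because $\Sigma$ is invertible on the stable $\infty$-category $L\sp$. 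In short, the point is that a Bousfield--Kuhn-type functor $\Phi$ transports the Kahn--Priddy splitting of spaces to the splitting of spectra demanded by Proposition~\ref{equiv}.
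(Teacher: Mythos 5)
Your proposal is correct and is essentially the paper's own argument: reduce to prime order via Lemma~\ref{reduce}, then apply $\Phi$ to the Kahn--Priddy section of Theorem~\ref{kp} to obtain condition~(3) of Proposition~\ref{equiv}. The only difference is that you spell out the bookkeeping ($\Phi\circ\Omega^{\infty+1}\simeq \Sigma^{-1}\circ L$ and the harmless desuspension) that the paper leaves implicit, and this bookkeeping is handled correctly.
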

\begin{proof}
By Lemma~\ref{reduce}, it suffices to assume that 
$G$ has prime order.  Applying $\Phi$ to the section of Theorem~\ref{kp}, we
deduce that condition 3 holds for such $G$, concluding the proof. 
\end{proof}

Using the $K(n)$-local Bousfield-Kuhn functors of \cite{Kuh89} and their generalization to the
telescopic setting in \cite{Bou01}, we recover: 

\begin{corollary}[{Cf. \cite[Theorem 1.5]{Kuh04a}}]
The equivalent conditions of Proposition~\ref{equiv} hold for $E = T(n)$.
\end{corollary}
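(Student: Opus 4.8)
The plan is to deduce the final statement --- that $T(n)$-local Tate constructions vanish --- by simply checking that the telescopic localization functor $L = L_{T(n)}$ (for $n \geq 1$; the case $n=0$ being classical) satisfies the hypothesis of the preceding theorem, namely that it factors as $\Phi \circ \Omega^\infty$ for some functor $\Phi \colon \mathcal{S}_\ast \to L_{T(n)}\sp$. First I would invoke the Bousfield--Kuhn functor: for each $n \geq 1$ and each implicit prime $p$, the papers \cite{Kuh89} (in the $K(n)$-local case) and \cite{Bou01} (in the telescopic case) construct a functor $\Phi = \Phi_n \colon \mathcal{S}_\ast \to L_{T(n)}\sp$ together with a natural equivalence $\Phi_n \circ \Omega^\infty \simeq L_{T(n)}$ on spectra. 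This is precisely the hypothesis on $L$ in the theorem above, so that theorem applies verbatim with $L = L_{T(n)}$, yielding that the equivalent conditions of Proposition~\ref{equiv} hold for every finite group $G$; in particular condition (1) gives the vanishing of all $T(n)$-local Tate constructions.

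There is essentially one point that deserves a remark rather than being genuinely hard: the case $n = 0$. When $n = 0$ the telescopic localization $L_{T(0)}$ agrees with rationalization $L_{K(0)}$, and there the vanishing is elementary (the composite $X^{hG} \to X \to X_{hG}$ inverts the norm up to $|G|$, which is invertible rationally), so there is no need for a Bousfield--Kuhn functor at all; alternatively one notes the transfer already splits rationally. So I would either state the corollary for $n \geq 1$ and handle $n = 0$ separately in a sentence, or simply remark that the $n = 0$ case is classical. The substantive content for $n \geq 1$ is entirely imported from the cited constructions of the Bousfield--Kuhn functor, so the ``proof'' is a one-line application of the theorem.

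The only mild subtlety to flag --- and the place where I would be most careful --- is making sure the naturality square $\Phi \circ \Omega^\infty \simeq L$ is exactly what the theorem consumes: the theorem only asks for the existence of \emph{some} functor $\Phi$ with $\Phi \Omega^\infty \simeq L$ as functors $\sp \to L\sp$, and this is exactly what Kuhn and Bousfield provide, so no compatibility with further structure (smash products, etc.) is needed. Thus I do not anticipate any real obstacle; the work has all been done in Proposition~\ref{equiv}, Lemma~\ref{reduce}, Theorem~\ref{kp}, and the final theorem, and this corollary is the immediate specialization. Concretely, the proof reads: for $n \geq 1$, the Bousfield--Kuhn functor $\Phi_n \colon \mathcal{S}_\ast \to L_{T(n)}\sp$ of \cite{Kuh89, Bou01} satisfies $\Phi_n \circ \Omega^\infty \simeq L_{T(n)}$, so the hypotheses of the preceding theorem are met with $L = L_{T(n)}$; the case $n = 0$ is classical. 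This completes the proof.
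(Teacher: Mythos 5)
Your proposal is correct and follows exactly the paper's route: the corollary is the one-line specialization of the main theorem to $L = L_{T(n)}$, with the hypothesis supplied by the Bousfield--Kuhn functor of \cite{Kuh89, Bou01}. Your explicit remark that the $n=0$ case must be handled separately (since the Bousfield--Kuhn functor exists only for $n>0$) is a reasonable point of care, consistent with the paper, which notes the rational case is elementary.
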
 
\begin{remark} 
We also obtain as a consequence that there can be no analog of the
Bousfield-Kuhn functor for $E$-localization when $E = K(n_1) \vee K(n_2)$ for
$n_1 <
n_2$. In fact, we have that $L_{K(n_1)}(E_{n_2})^{tC_p} \neq 0$. 
\end{remark} 

\bibliographystyle{amsalpha}
\bibliography{Tate}

\end{document}